\begin{document}
\newtheorem{thm}{Theorem}
\newtheorem{cor}{Corollary}
\newtheorem{lem}{Lemma}
\newtheorem{slem}{Sublemma}
\newtheorem{prop}{Proposition}
\newtheorem{defn}{Definition}
\newtheorem{conj}{Conjecture}
\newtheorem{ques}{Question}
\newtheorem{claim}{Claim}
\newcounter{constant} 
\newcommand{\newconstant}[1]{\refstepcounter{constant}\label{#1}}
\newcommand{\useconstant}[1]{C_{\ref{#1}}}
\title{Desingularization of branch points of minimal disks in $\mathbb{R}^4$}
\author{Marina Ville}
\date{ }
\maketitle
\begin{abstract}
We deform a minimal disk in $\mathbb{R}^4$ with a branch point into symplectic minimally immersed disks with only transverse double points.
\end{abstract}  
\section{Introduction}
This paper continues the study of branch points of minimal disks in $\mathbb{R}^4$ and their knots which was started in [Vi], [S-V1] and [S-V2]. Near the branch point, the disk is symplectic for two different symplectic structures, one for each  orientation in $\mathbb{R}^4$. For each of these symplectic structures, we show that the branched disk can be deformed into symplectic minimally immersed disks with only transverse double points. If the branched disk is topologically embedded, this can be done without changing the transverse knot type of the boundary knot and the number of the double points of the immersed disks is given by the self-linking number of the transverse knot.
\section*{Acknowledgements}
The author is very grateful to Marc Soret for being a great long-time partner in the study of branch points.
\section{Preliminairies}
\subsection{Branch points}\label{definition of a branch point}
Let $F:\mathbb{D}\longrightarrow\mathbb{R}^4$ be a map. A point $p\in\mathbb{D}$ is a branch point of $F$ if we can find a coordinate system $(x_i)$ around $F(p)$ in which the map is written as
\begin{equation}\label{branch point}
F_1(z)+iF_2(z)=z^N+o(|z|^N)\ \ \ \ \ F_3(z)+iF_4(z)=o(|z|^N)
\end{equation}
where $F_i(z)$ denotes the $i$-th component of $F(z)$ in the coordinate system $(x_i)$.\\
Here and throughout this paper, $p$ is identifies with $0$ and $F(p)$ is identified with $(0,...,0)$ in $\mathbb{R}^4$.
The quantity $N-1$ is called the {\it branching order} of $F$ at $p$.
\subsection{The Grassmannian}
We denote by $G_2^+(\mathbb{R}^4)$ the Grassmannian of oriented $2$-planes in $\mathbb{R}^4$. An oriented $2$-plane $P$ can be viewed as the $2$-vector $e_1\wedge e_2$ where  $(e_1, e_2)$ is a positive orthonormal basis of $P$. Thus $G_2^+(\mathbb{R}^4)$ is embedded in $\Lambda^2(\mathbb{R}^4)$; if we write $P$ as a $2$-vector we can define
\begin{equation}\label{splitting of G2}
H=\frac{1}{\sqrt{2}}(P+\star P)\ \ \ \ \ K=\frac{1}{\sqrt{2}}(P-\star P)
\end{equation}
where $\star:\Lambda^2(\mathbb{R}^4)\longrightarrow \Lambda^2(\mathbb{R}^4)$ is the Hodge operator ([Be] or [Jo] p. 82).\\
The $2$-vector $H$ (resp. $K$) defined  in (\ref{splitting of G2}) belongs to the unit sphere of $\Lambda^{+}(\mathbb{R}^4$) (resp. $\Lambda^{-}(\mathbb{R}^4$)) and we derive an identification
\begin{equation}\label{Grassmannian}
G_2^+(\mathbb{R}^4)\cong\mathbb{S}(\Lambda^+(\mathbb{R}^4))\times\mathbb{S}(\Lambda^-(\mathbb{R}^4))
\end{equation}
\subsection{The Gauss map}
If $F:\mathbb{D}\longrightarrow \mathbb{R}^4$ is an immersion, we derive two Gauss maps
\begin{equation}\label{Gauss maps}
\gamma_+:\mathbb{D}\longrightarrow \mathbb{S}(\Lambda^+(\mathbb{R}^4)),\ \ \ \
\gamma_-:\mathbb{D}\longrightarrow \mathbb{S}(\Lambda^-(\mathbb{R}^4))
\end{equation}
as follows. We let $z\in  \mathbb{D}$ and let $P$ be the oriented tangent plane  $F_\star(T_z\mathbb{D})$; the orientation on $P$ is defined via $F$ by the orientation on $\mathbb{D}$. Using (\ref{splitting of G2}), we write 
$P=\frac{1}{\sqrt{2}}(H+K)$ where $H$ (resp. $K$) of $\Lambda^+(\mathbb{R}^4)$ (resp. $\Lambda^-(\mathbb{R}^4)$). We let
\begin{equation}\label{definition des appli de gauss}
H=\gamma_+(z)\ \ \ \ \ \ K=\gamma_-(z)
\end{equation}
Note that there is another way of defining the Gauss map: we write $F$ in components as $F=(F_1,F_2,F_3,F_4)$ and define for every $i=1,...,4$ the complex number
\begin{equation}\label{complexification}
\phi_i=\frac{\partial F_i}{\partial x}-i\frac{\partial F_i}{\partial y}
\end{equation}
Identifying the $2$-spheres $\mathbb{S}(\Lambda^+(\mathbb{R}^4))$ and $\mathbb{S}(\Lambda^-(\mathbb{R}^4))$ with the complex projective line $\mathbb{C}P^1$, we can write ([M-O])
\begin{equation}\label{appli de gauss: droite projective}
\gamma_+=\frac{\phi_3+i\phi_4}{\phi_1-i\phi_2}\ \ \ \ 
\gamma_-=\frac{-\phi_3+i\phi_4}{\phi_1-i\phi_2}
\end{equation}
\subsection{The symplectic structures associated to the branch point} 
The tangent plane at $p$ to $F(\mathbb{D})$ in \S\ref{definition of a branch point} is the plane $P_0$ generated by $\frac{\partial}{\partial x_1}$ and $\frac{\partial}{\partial x_2}$; we orient it by taking $(\frac{\partial}{\partial x_1},\frac{\partial}{\partial x_2})$ to be a positive basis; it is a complex line for two orthogonal complex structures on $\mathbb{R}^4$, one for each orientation. In terms of $2$-vectors, these complex structures are written
\begin{equation}\label{complex structure}
H_0=\frac{1}{\sqrt{2}}(P_0+*P_0)\ \ \ \ K_0=\frac{1}{\sqrt{2}}(P_0-*P_0)
\end{equation}
The $2$-vectors $H_0$ and $K_0$ define symplectic forms $\omega_+$ and $\omega_-$ on $\mathbb{R}^4$ as follows
\begin{equation}\label{symplectic structures}
\omega_+(u,v)=<H_0,u\wedge v>\ \ \ \ \omega_-(u,v)=<K_0,u\wedge v>
\end{equation}
for two vectors $u,v\in\mathbb{R}^4$ ($<,>$ denotes the scalar product on $2$-vectors).\\
In a neighbourhood of $p$, a tangent plane $P$ to $F(\mathbb{D})$ is
symplectic for both $\omega_+$ and $\omega_-$, that is, it verifies
\begin{equation}\label{+symplectic tangent planes}
<P,H_0>>0
\end{equation}
\begin{equation}\label{+symplectic tangent planes}
<P,K_0>>0.
\end{equation}
Unlike in the case of a complex curve in a complex surface, there is no preferred orientation associated to a minimal surface, so we consider both these symplectic structures. 
\subsection{The knot of the branch point}\label{The knot of the branch point}
In this section we assume that the map $F$ defined in \ref{definition of a branch point} is a topological embedding.\\
Given a small positive number $\epsilon$, we denote by  $\mathbb{S}_\epsilon$ (resp. $\mathbb{B}_\epsilon$) the sphere (resp. ball) centered at $p$ and of radius $\epsilon$. If $\epsilon$ is small enough, $K^\epsilon=\mathbb{S}_\epsilon\cap F(D)$ is a knot
and $F(\mathbb{D}\cap\mathbb{B}_\epsilon)$ is homeomorphic to the cone on $K^\epsilon$ (cf. [S-V1] where this construction follows from [Mi]).
\subsection{The braid defined by the knot $K^\epsilon$ and its writhe number}
The knot $K^\epsilon$ is naturally presented as a braid with $N$ strands in the $3$-sphere (cf. [Vi]); the axis of this braid is the great circle in the normal plane at $0$, that is the plane which is orthogonal to the tangent plane at $0$ generated by $\frac{\partial}{\partial x_1}$ and $\frac{\partial}{\partial x_2}$. The {\it algebraic crossing number} of this braid is
\begin{equation}\label{definition of the writhe number}  
e(K^\epsilon)=lk(K^\epsilon,\hat{K}^\epsilon)
\end{equation}
where $\hat{K}^\epsilon$ is the knot obtained by pushing slightly $K^\epsilon$ in the direction of the axis of the braid.\\
REMARK. In [S-V1], we consider the knot in the cylinder $\{(z_1,z_2)\in\mathbb{C}^2\slash |z_1|=\eta\}$; and in [S-V2] we use the terme {\it writhe} instead of {\it algebraic crossing number}. 

\section{Desingularization of a branch point}
\begin{thm}\label{theorem:single disk}
Let $F:\mathbb{D}\longrightarrow\mathbb{R}^4$ be a minimal map with a branch point as in \S \ref{definition of a branch point}.\\
For some real number $\epsilon>0$ there exists, for $t\in [0,\epsilon)$, a smooth family $F_t^{(+)}:\mathbb{D}\longrightarrow \mathbb{R}^4$ 
(resp. $F_t^{(-)}:\mathbb{D}\longrightarrow \mathbb{R}^4$) of minimal immersions such that\\
1) $F^{(-)}_0=F^{(+)}_0=F$.\\
For every $t$ small enough, \\
2) $F_t$ is an immersion with transverse double points.\\
3) $F_t^{(+)}$ 
(resp. $F_t^{(-)}$) is symplectic w.r.t. $\omega_+$ (resp. $\omega_-$).\\
If $F$ is an embedding, we have\\
4) The numbers $D^{(+)}$, $D^{(-)}$ of double points of  $F_t^{(+)}$, $F_t^{(-)}$ verify
\begin{equation}\label{equation:number of double points in the positive case}
2D^{(+)}=e(K)-(N-1)
\end{equation}
\begin{equation}\label{equation:number of double points in the negative case}
2D^{(-)}=-w(K)-(N-1)
\end{equation}
where $N-1$ is the branching order (cf. \S \ref{definition of a branch point}).
\end{thm}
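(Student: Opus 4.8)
The statement is local near $p=0$, so I work on a small disk and leave $F$ unchanged outside a ball. Recall that $F$ is minimal exactly when the vector $\phi=(\phi_1,\phi_2,\phi_3,\phi_4)$ of \eqref{complexification} is holomorphic (harmonicity) and takes values in the complex null cone $Q=\{X\in\mathbb C^4:\sum_iX_i^2=0\}$ (conformality); the branch point of order $N-1$ is precisely the point where $\phi$ hits the vertex $0$ of $Q$, and by \eqref{branch point} one has $\phi(z)=z^{N-1}v+O(z^{N})$ with $v=(N,-iN,0,0)\in Q\setminus\{0\}$. The plan is to deform $\phi$ inside $Q$ so that its image avoids the vertex, to integrate back to a family $F_t$, to check that the symplectic condition \eqref{+symplectic tangent planes} survives the deformation, and finally to read off the number of double points from the braid $K^\epsilon$ via an adjunction identity.

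\textbf{Construction of $F_t^{(+)}$.} Set $\phi_t=\phi+t\,\psi$, where $\psi:\mathbb D\to\mathbb C^{4}$ is holomorphic with $\langle\psi,\psi\rangle\equiv 0$ and $\langle\phi,\psi\rangle\equiv 0$ for the symmetric bilinear form on $\mathbb C^4$; these two conditions are exactly what forces $\phi_t\in Q$ for every $t$, and such $\psi$ exist because fibrewise the constraint on $\psi(z)$ is the union of two $2$-planes through $\phi(z)$, depending holomorphically on $z$, so a single-valued branch can be chosen over the simply connected disk (at $z=0$, where $\phi$ vanishes, the only constraint is $\psi(0)\in Q$). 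Taking $\psi(0)=v$ gives $\phi_t(z)=v\,(z^{N-1}+t)+O(z^{N})+O(tz)$, so $\phi_t$ does not vanish near $p$ for $0<t<\epsilon$; the only delicate points are the $N-1$ zeros of $z^{N-1}+t$, where $\phi_t$ equals $t z\bigl(\psi'(0)-\phi_{(N)}\bigr)+\cdots$ ($\phi_{(N)}$ being the coefficient of $z^{N}$ in $\phi$) and is nonzero for generic admissible $\psi$. Integrating, $F_t^{(+)}(z)=F(z_0)+\mathrm{Re}\int_{z_0}^{z}\phi_t\,d\zeta=F(z)+t\,G(z)$ is a smooth family of minimal maps with $F_0^{(+)}=F$, which are immersions for small $t>0$, and a further generic choice of $\psi$ makes the self-intersections transverse. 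Since $v_1+iv_2=2N$ while $v_1-iv_2=v_3+iv_4=v_3-iv_4=0$, the density of $F_t^{*}\omega_+$, namely $\tfrac1{4\sqrt2}\bigl(|\phi_1^t+i\phi_2^t|^2+|\phi_3^t+i\phi_4^t|^2-|\phi_1^t-i\phi_2^t|^2-|\phi_3^t-i\phi_4^t|^2\bigr)$, is $\tfrac{N^2}{\sqrt2}\,|z^{N-1}+t|^2+\cdots>0$ away from the new critical points; at those points it is kept positive by arranging, through the first Taylor coefficient $\psi'(0)$ (constrained only by $\langle v,\psi'(0)\rangle=-\langle\phi_{(N)},v\rangle$), that $\psi'(0)-\phi_{(N)}$ lies in the $\omega_+$-positive part of $Q$. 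Hence $F_t^{(+)}$ satisfies \eqref{+symplectic tangent planes} on a fixed neighbourhood of $p$. Replacing $J_+$ by $J_-$ — equivalently, interchanging the roles of $\gamma_+$ and $\gamma_-$ in \eqref{appli de gauss: droite projective} and adapting the choice of $\psi$ — produces $F_t^{(-)}$, symplectic for $\omega_-$.

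\textbf{Counting the double points.} Suppose $F$ is an embedding, so $K^\epsilon=\mathbb S_\epsilon\cap F(\mathbb D)$ is the $N$-strand braid of \S\ref{The knot of the branch point} with algebraic crossing number $e(K^\epsilon)$. For small $t>0$ the surface $F_t^{(+)}(\mathbb D\cap\mathbb B_\epsilon)$ is an immersed disk in $\mathbb B_\epsilon$ with boundary $K^\epsilon$ and $D^{(+)}$ transverse double points; being $\omega_+$-symplectic, each of these is a \emph{positive} self-intersection (two transverse symplectic $2$-planes, compatibly oriented, always meet with intersection number $+1$, the positive part of the Grassmannian being connected). Symplectically resolving the $D^{(+)}$ nodes turns this disk into an embedded symplectic surface of genus $D^{(+)}$ with boundary $K^\epsilon$, whence the adjunction equality for symplectic surfaces with transverse boundary gives $\mathrm{sl}_+(K^\epsilon)=2D^{(+)}-1$ for the self-linking number. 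The framing at stake is the braid framing, for which $\mathrm{sl}_+(K^\epsilon)=e(K^\epsilon)-N$ (this is the content of the braid presentation of \S\ref{The knot of the branch point}; cf. [S-V1], [Vi]); comparing the two yields \eqref{equation:number of double points in the positive case}. For $F_t^{(-)}$ the identical argument, with the orientation of the normal $2$-plane reversed, replaces $e(K^\epsilon)$ by $-w(K^\epsilon)$ — the crossing number of the braid attached to the opposite orientation, i.e. the writhe $w(K^\epsilon)$ of the Remark above and of [S-V2] — giving \eqref{equation:number of double points in the negative case}.

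\textbf{Main difficulty.} The crux is the choice of $\psi$. A \emph{generic} deformation of the Weierstrass data of $F$ does not remove the branch point: it merely breaks the order-$(N-1)$ zero of $\phi$ into $N-1$ simple zeros, that is, into $N-1$ simple branch points. Thus $\psi$ must be pinned down at order $0$ (by $\psi(0)=v$, forced by the model \eqref{branch point}) and partly at order $1$, in such a way that \emph{simultaneously} $\phi_t$ genuinely misses the vertex of $Q$ for $t>0$ and each of the $N-1$ resulting points is an honest transverse self-intersection whose two tangent planes lie in the $\omega_+$- (resp. $\omega_-$-) positive cone; checking that these requirements are compatible, uniformly as $t\to 0$, is the analytic heart of the argument. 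Once this is in hand, the last step is only the Bennequin/adjunction bookkeeping already present in [S-V1] and [S-V2].
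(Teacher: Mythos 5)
Your construction of the deformation itself is essentially the paper's, seen through the null-curve lens: perturbing $\phi$ inside a ruling plane of the null quadric $Q$ through $\phi(z)$ is exactly what keeps one Gauss map fixed, which is how the paper (via the coupled polynomial perturbations $h_1,\dots,h_4$ satisfying $h_1h_2+h_3h_4=0$ identically) gets symplecticity for free in (\ref{equation:symplectic}). The genuine gap is in part 2). You work with a single one-parameter linear path $\phi_t=\phi+t\psi$ and assert that ``a further generic choice of $\psi$ makes the self-intersections transverse.'' Nothing in the proposal supports this: a genericity argument for double points requires a parametrized transversality statement, i.e.\ enough independent deformation parameters to move $F(z_1)$ and $F(z_2)$ independently in $\mathbb{R}^4$ while staying in the class of minimal, $\omega_+$-symplectic maps. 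That is precisely what the paper's multi-parameter family $(A,B)\in\mathbb{C}^{n_1+1}\times\mathbb{C}^{n_3+1}$ and the determinant computation of Lemma \ref{determinant} accomplish (it takes one holomorphic parameter $a_0$ and one antiholomorphic parameter $b_0$, acting on complementary pairs of coordinates, to span a complement of the diagonal $\Delta$). Even then the conclusion is only that the good parameters form a \emph{dense} subset $X_1\cap X_2$, not an open one; to produce an actual continuous family $F_t$, $t\in[0,\epsilon)$, lying in the good set for all $t>0$, the paper must prove that $X_1\cap X_2$ is subanalytic and invoke the Curve Selection Lemma. Your proposal has no substitute for either step, and a fixed generic $\psi$ is not known to give only transverse double points for all (or even almost all) small $t$.

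Two smaller points. First, you cannot ``leave $F$ unchanged outside a ball'': minimal maps satisfy an elliptic system with unique continuation, so a cutoff destroys minimality; your actual construction is global on $\mathbb{D}$ and that remark should be dropped (the correct fix, as in the paper, is to shrink the disk). Second, your double-point count (positivity of symplectic intersections plus adjunction/Bennequin for the transverse boundary) silently assumes that the boundary knot of $F_t^{(+)}(\mathbb{D})\cap\mathbb{B}_\epsilon$ is still transversally isotopic to $K^\epsilon$ and that all double points lie inside $\mathbb{B}_\epsilon$; the paper proves the transverse-isotopy statement as a separate lemma (the estimate $<\gamma,iF_t^{(+)}(z)>\neq 0$ derived from (\ref{point})) before citing [H-H] for $2D_t=sl(K)+1$. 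Your adjunction sketch is a plausible alternative to that citation, but it does not remove the need for that lemma.
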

PROOF OF THEOREM \ref{theorem:single disk}.\\
Each coordinate function $F_i$, $i=1,...,4$ is harmonic, hence there exist four holomorphic functions $f_1,...,f_4$ such that
\begin{equation}\label{equation:formule de F}
F_1+iF_2=f_1+\bar{f_2}\ \ \ \ \ \ \ \ F_3+iF_4=f_3+\bar{f_4}
\end{equation} 
Since $F$ is a conformal map, the $f_i$'s verify (cf. [M-W])
\begin{equation}\label{equation:four functions}
f'_1f'_2+f'_3f'_4=0
\end{equation}
\begin{lem}\label{lemma:branch point}
A point $z_0$ in $\mathbb{D}$ is a branch point if and only if for every $i=1,...,4$
$$f'_i(z_0)=0$$
\end{lem}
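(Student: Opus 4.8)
The plan is to restate everything in terms of the holomorphic derivatives $\phi_i$ of (\ref{complexification}) and then to invoke the Weierstrass-type expansion of a non-constant minimal surface at a point where its differential vanishes.

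First I would record the dictionary between the $f_i$ and the $\phi_i$. Applying $\partial/\partial z$ and $\partial/\partial\bar z$ to (\ref{equation:formule de F}) and using that the $f_i$ are holomorphic gives
\begin{equation*}
2f'_1=\phi_1+i\phi_2,\qquad 2f'_2=\phi_1-i\phi_2,\qquad 2f'_3=\phi_3+i\phi_4,\qquad 2f'_4=\phi_3-i\phi_4 .
\end{equation*}
This is an invertible linear change, so ``$f'_i(z_0)=0$ for all $i$'' is equivalent to ``$\phi_i(z_0)=0$ for all $i$'', i.e. to $dF_{z_0}=0$. Moreover $F$ is conformal, which is the same as $\sum_i\phi_i^2=4(f'_1f'_2+f'_3f'_4)=0$ by (\ref{equation:four functions}); this forces the real differential $dF$ to have rank $0$ or $2$ at every point, so $dF_{z_0}=0$ precisely when $F$ fails to be an immersion at $z_0$. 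It therefore suffices to show that $z_0$ is a branch point in the sense of \S\ref{definition of a branch point} if and only if $dF_{z_0}=0$.

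One direction is immediate: if $F$ has the normal form (\ref{branch point}) with $N\ge 2$ in some coordinate system $(x_i)$ on $\mathbb{R}^4$, then all first-order partial derivatives of all four components vanish at $z_0$ in those coordinates, and the vanishing of the first differential does not depend on the choice of coordinates on $\mathbb{R}^4$; hence $dF_{z_0}=0$, and so $f'_i(z_0)=0$ for every $i$.

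The converse is where the actual work lies. Suppose $dF_{z_0}=0$ and normalize $z_0=0$. The $\phi_i$ are holomorphic (the $F_i$ being harmonic) and not all identically zero ($F$ is non-constant), so $n:=\min_i\mathrm{ord}_0\,\phi_i$ is finite, and since every $\phi_i(0)=0$ we have $n\ge 1$. Write $\phi_i(z)=z^n(\alpha_i+O(z))$ with $\alpha=(\alpha_i)\ne 0$; the relation $\sum_i\phi_i^2=0$ then forces $\sum_i\alpha_i^2=0$, so $\alpha=a-ib$ with $a,b\in\mathbb{R}^4$, $|a|=|b|\ne 0$ and $a\perp b$. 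After a rotation of $\mathbb{R}^4$ we may assume $\alpha$ is a positive real multiple of $e_1-ie_2$, and then a homothety of $\mathbb{R}^4$ lets us normalize so that, via $2f'_1=\phi_1+i\phi_2$, we have $f_1(z)-f_1(0)=z^{N}+O(z^{N+1})$ with $N:=n+1\ge 2$; simultaneously $2f'_2$, $2f'_3$ and $2f'_4$ vanish to order $\ge n+1$, whence $f_j(z)-f_j(0)=O(z^{N+1})$ for $j=2,3,4$. Substituting into (\ref{equation:formule de F}), translating the target so that the $f_j(0)$ become $0$, and using $\overline{O(z^{N+1})}=o(|z|^N)$ yields $F_1+iF_2=z^N+o(|z|^N)$ and $F_3+iF_4=o(|z|^N)$, which is exactly (\ref{branch point}) with branching order $N-1\ge 1$. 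The step to be careful about is the bookkeeping of the vanishing orders together with the observation that conformality pins the leading vector $\alpha$ down to an isotropic vector of $\mathbb{C}^4$, which is what makes the coordinate change on $\mathbb{R}^4$ possible.
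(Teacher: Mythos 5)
Your argument is correct, and it is in fact more complete than the proof in the paper. The paper's proof has two steps: it \emph{asserts} that $z_0$ is a branch point if and only if $\frac{\partial F}{\partial x}(z_0)=\frac{\partial F}{\partial y}(z_0)=0$, and then reads off from the explicit real formulas for $\frac{\partial F}{\partial x}$ and $\frac{\partial F}{\partial y}$ (whose entries are $\mathrm{Re}(f'_1\pm f'_2)$, $\mathrm{Im}(f'_1\mp f'_2)$, etc.) that this vanishing is equivalent to $f'_i(z_0)=0$ for all $i$. Your dictionary $2f'_1=\phi_1+i\phi_2$, $2f'_2=\phi_1-i\phi_2$, \dots\ is exactly that second step in complex notation. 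Where you genuinely diverge is that you also prove the first step, which is not a tautology under the definition of a branch point given in \S\ref{definition of a branch point} (existence of coordinates realizing the normal form (\ref{branch point})): the implication ``$dF_{z_0}=0$ implies the normal form'' requires the Weierstrass/Micallef--White expansion argument you supply --- Taylor-expanding the $\phi_i$ at their common zero, noting that conformality (equivalently (\ref{equation:four functions})) forces the leading coefficient vector $\alpha$ to be isotropic, rotating $\mathbb{R}^4$ to carry $\alpha$ to a positive multiple of $e_1-ie_2$, and integrating. The paper's version is shorter only because it implicitly treats ``$dF_{z_0}=0$'' as the definition of a branch point; your version is the honest proof relative to the stated definition, and every step of it (the rank-$0$-or-$2$ dichotomy from conformality, the order bookkeeping, $\overline{O(z^{N+1})}=o(|z|^N)$) checks out.
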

\begin{proof}
The point $z_0$ is a branch point if and only if $\frac{\partial F}{\partial x}(z_0)=\frac{\partial F}{\partial y}(z_0)=0$.  Lemma \ref{lemma:branch point} follows from looking at the formulae for the derivatives of $F$ 
\[ \frac{\partial F}{\partial x} = \left( \begin{array}{c}
Re(f'_1+f'_2) \\
Im (f'_1-f'_2) \\
Re(f'_3+f'_4)\\
Im (f'_3-f'_4)\end{array} \right) \] 
\[ \frac{\partial F}{\partial y} = \left( \begin{array}{c}
-Im(f'_1+f'_2) \\
Re(f'_1-f'_2) \\
-Im(f'_3+f'_4)\\
Re(f'_3-f'_4)\end{array} \right) \] 
\end{proof}
We now assume $z_0=0$ which causes no loss of generality.\\
Going back to the assumptions of Th. \ref{theorem:single disk}, we derive the existence of holomorphic functions $\tilde{f}_i$'s and positive integers $n_i$, $i=1,...,4$ such that for every
$i=1,...,4$
\begin{equation}\label{equation:zero}
f_i'=z^{n_i}\tilde{f}_i
\end{equation}
with $\tilde{f}_i(0)\neq 0$. We derive from  that (\ref{equation:four functions}) that
\begin{equation}\label{quatre puissances}
n_1+n_2=n_3+n_4
\end{equation}
Without loss of generality, we assume that $n_1< n_2,n_3,n_4$. It follows from (\ref{quatre puissances}) that $\tilde{f}_3$ and $\tilde{f}_4$ have order smaller than $\tilde{f}_2$. \\
We construct the $F^{(+)}_t$'s and we indicate what to change to construct the $F^{(-)}_t$'s.\\
For $A=(a_0,...,a_{n_1})\in\mathbb{C}^{n_1+1}$ and $B=(b_0,...,b_{n_3})\in\mathbb{C}^{n_3+1}$, we let
\begin{equation}\label{equation:nouvelles fonctions1}
h_1(z,A,B)=(z^{n_1}+\sum_{i=0}^{n_1}a_iz^i)\tilde{f}_1(z)\ \ \
h_2(z,A,B)=z^{n_2-n_3}(z^{n_3}+\sum_{i=0}^{n_3}b_iz^i)\tilde{f}_2(z)
\end{equation}
\begin{equation}\label{equation:nouvelles fonctions2}
h_3(z,A,B)=(z^{n_3}+\sum_{i=0}^{n_3}b_iz^i)\tilde{f}_3(z)\ \ \
h_4(z,A,B)=z^{n_4-n_1}(z^{n_1}+\sum_{i=0}^{n_1}a_iz^i)\tilde{f}_4(z)
\end{equation}
The $h_i$'s are holomorphic and verify (using (\ref{equation:four functions}) and (\ref{quatre puissances}))
\begin{equation}\label{equation:quatre nouvelles fonctions}
h_1h_2+h_3h_4=0
\end{equation}
For $i=1,...,4$, we let 
\begin{equation}\label{equation:nouvelle fonction}
f_i(z,A,B)=\int_0^z h_i(\xi,A,B)d\xi
\end{equation}
The  $f_i(.,A,B)$'s are holomorphic and verify $\frac{\partial f_i}{\partial z}=h_i$.
We let $$F(z,A,B)=(f_1(z,A,B)+\bar{f}_2(z,A,B),f_3(z,A,B)+\bar{f}_4(z,A,B)).$$ It follows from (\ref{equation:quatre nouvelles fonctions}) that for every $(A,B)$, the $F(.,A,B)$'s are minimal maps. We assume that $(A,B)$ belongs to the open dense set $X_1$ of $\mathbb{C}^{n_1+1}\times\mathbb{C}^{n_4+1}$
of the $(A,B)$'s such that the polynomials $z^{n_1}+\sum_0^{n_1}a_iz^i$
and $z^{n_4}+\sum_0^{n_4}b_iz^i$ have distinct roots which are all different from $0$.
It follows from Lemma \ref{lemma:branch point} that for  $(A,B)$ in $X_1$, $F(.,A,B)$ is an immersion. \\
We compute their Gauss maps using (\ref{appli de gauss: droite projective})
and we see that 
\begin{equation}\label{equation:symplectic}
\gamma_+(F(.,A,B))=\frac{h_3(.,A,B)}{h_2(.,A,B)}=z^{n_3-n_2}\frac{\tilde{f}_3}{\tilde{f}_2}=\frac{f'_3}{f'_2}=\gamma_+(F)
\end{equation}
It follows that the $F(.,A,B)$'s are symplectic w.r.t. $\omega_+$.\\
Note that if we want the 
$F(.,A,B)$'s to be symplectic w.r.t. $\omega_-$, we define instead
\begin{equation}\label{equation:nouvelles fonctions negatives1}
h_1(z,A,B)=(z^{n_1}+\sum_{i=0}^{n_1}a_iz^i)\tilde{f}_1(z)\ \ \
h_2(z,A,B)=z^{n_2-n_4}(z^{n_4}+\sum_{i=0}^{n_4}b_iz^i)\tilde{f}_2(z)
\end{equation}
\begin{equation}\label{equation:nouvelles fonctions negatives2}
h_3(z,A,B)=z^{n_3-n_1}(z^{n_1}+\sum_{i=0}^{n_1}a_iz^i)\tilde{f}_3(z)\ \ \
h_4(z,A,B)=(z^{n_4}+\sum_{i=0}^{n_4}b_iz^i)\tilde{f}_4(z)
\end{equation}
We will now use the Transversality  Lemma to prove that that for generic $A,B$, $F(.,A,B)$ has only transverse double points. We do it for the functions defined in (\ref{equation:nouvelles fonctions1}) and (\ref{equation:nouvelles fonctions2}); the proof for (\ref{equation:nouvelles fonctions negatives1}) and (\ref{equation:nouvelles fonctions negatives2}) works identically. \\
We define
$$\Phi:\mathbb{C}^{n_1+1}\times \mathbb{C}^{n_3+1}\times\mathbb{D}\times\mathbb{D}\longrightarrow 
\mathbb{R}^4\times\mathbb{R}^4$$
$$(A,B,z_1,z_2)\mapsto (F(z_1,A,B),F(z_2,A,B))$$
and we prove
\begin{lem}\label{transversalite}
There exists a positive number $\eta$ such that, for every $A$, $B$,
if $z_1\neq z_2$ and $|z_1|<\eta$, $|z_2|<\eta$, then $\Phi$ is transverse to the diagonal $\Delta$ of $\mathbb{R}^4\times\mathbb{R}^4$ at $(A,B,z_1,z_2)$.
\end{lem}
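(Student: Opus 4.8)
The plan is to reduce transversality to the diagonal to a submersion statement and then to observe that two of the parameters already suffice once $z_1\neq z_2$ are close to $0$. Since $T\Delta=\{(v,v):v\in\mathbb{R}^4\}$ and $(\mathbb{R}^4\times\mathbb{R}^4)/T\Delta$ is identified with $\mathbb{R}^4$ via $(u,v)\mapsto u-v$, the map $\Phi$ is transverse to $\Delta$ at a point $q=(A,B,z_1,z_2)$ exactly when the differential of
\[
\Psi(A,B,z_1,z_2):=F(z_1,A,B)-F(z_2,A,B)\in\mathbb{R}^4
\]
is onto at $q$ (the condition being vacuous when $q\notin\Phi^{-1}(\Delta)$). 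So it suffices to exhibit, for every $A,B$ and every $z_1\neq z_2$ with $|z_1|,|z_2|<\eta$, four real parameter directions along which $d\Psi$ spans $\mathbb{R}^4$.

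I would use the two constant coefficients $a_0\in\mathbb{C}$ and $b_0\in\mathbb{C}$, that is, four real directions. Since $a_0$ occurs only in $h_1$ and $h_4$ and $b_0$ only in $h_2$ and $h_3$, differentiating $f_i(z,A,B)=\int_0^z h_i(\xi,A,B)\,d\xi$ under the integral sign and using $F=(f_1+\bar f_2,\,f_3+\bar f_4)$ gives, for holomorphic increments $\delta a_0,\delta b_0$,
\[
\delta_{a_0}\Psi=\bigl(P_1\,\delta a_0,\ \overline{Q_4}\,\overline{\delta a_0}\bigr),\qquad \delta_{b_0}\Psi=\bigl(\overline{Q_2}\,\overline{\delta b_0},\ P_3\,\delta b_0\bigr),
\]
where, using path independence of holomorphic primitives to replace $\int_0^{z_1}-\int_0^{z_2}$ by the segment integral $\int_{z_2}^{z_1}$,
\[
P_1=\int_{z_2}^{z_1}\tilde f_1,\quad P_3=\int_{z_2}^{z_1}\tilde f_3,\quad Q_2=\int_{z_2}^{z_1}\xi^{\,n_2-n_3}\tilde f_2,\quad Q_4=\int_{z_2}^{z_1}\xi^{\,n_4-n_1}\tilde f_4 .
\]
Thus $d\Psi$ restricted to the $(a_0,b_0)$ directions is the real-linear map $(\delta a_0,\delta b_0)\mapsto(P_1\delta a_0+\overline{Q_2}\,\overline{\delta b_0},\ \overline{Q_4}\,\overline{\delta a_0}+P_3\delta b_0)$ on $\mathbb{C}^2\cong\mathbb{R}^4$.

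The arithmetic input is that $n_1<n_2,n_3,n_4$ together with $n_1+n_2=n_3+n_4$ forces $n_4-n_1=n_2-n_3\geq1$. Dividing the above matrix by the nonzero scalar $z_1-z_2$ and estimating the integrals along the segment $[z_2,z_1]\subset\{|\xi|<\eta\}$ gives $P_1/(z_1-z_2)=\tilde f_1(0)+O(\eta)$ and $P_3/(z_1-z_2)=\tilde f_3(0)+O(\eta)$, while $Q_2/(z_1-z_2)$ and $Q_4/(z_1-z_2)$ are $O(\eta)$; these bounds are \emph{uniform} in $z_1,z_2$ because the common factor $z_1-z_2$ has been removed. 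Since $\tilde f_1(0)\neq0$ and $\tilde f_3(0)\neq0$, the rescaled matrix is within $O(\eta)$ of the invertible block-diagonal real-linear map $(\delta a_0,\delta b_0)\mapsto(\tilde f_1(0)\,\delta a_0,\ \tilde f_3(0)\,\delta b_0)$, hence is itself invertible once $\eta$ is small enough, with the same $\eta$ for every $A,B$ and every admissible $z_1\neq z_2$. Consequently $d\Psi$ is onto at every such point and $\Phi$ is transverse to $\Delta$ there, which is the assertion. The step that needs care is precisely this uniformity — that the size of the off-diagonal contributions $Q_2,Q_4$ relative to $P_1,P_3$ is governed by $\eta$ alone and does not deteriorate as $z_1\to z_2$ — together with the bookkeeping of the conjugations coming from the $\bar f_2,\bar f_4$ terms, which make $\Psi$ only real-linear (not $\mathbb{C}$-linear) in $(a_0,b_0)$ but leave the leading block invertible.
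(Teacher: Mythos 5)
Your argument is correct and is essentially the paper's own proof: both use only the $a_0$ and $b_0$ directions, exploit that $F$ is holomorphic in $a_0$ and antiholomorphic in $b_0$, and win because $\tilde f_1(0),\tilde f_3(0)\neq 0$ while $n_2-n_3>0$ and $n_4-n_1>0$ make the cross terms $O(\eta)$ after factoring out $z_1-z_2$. The only difference is presentational — you quotient $\mathbb{R}^4\times\mathbb{R}^4$ by $T\Delta$ and check invertibility of a real-linear $2\times 2$ block map, whereas the paper introduces an auxiliary complex structure $J_1$ and phrases the same computation as the nonvanishing of a complex $4\times 4$ determinant against $\epsilon_1,\epsilon_2$.
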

\begin{proof}
We identify $\mathbb{R}^4$ with $\mathbb{C}^{2}$; if $J_0$ is the canonical complex structure on $\mathbb{C}^{2}$, we  introduce a new orthogonal complex structure $J_1$ on $\mathbb{C}^{2}$ defined
\begin{equation} J_1(1,0)=(i,0)\ \ \ \ J_1(0,1)=(0,-i)
\end{equation}
The point of this change is to make $F$ holomorphic w.r.t. $A$ and antiholomorphic w.r.t. $B$.
If we use (\ref{equation:nouvelles fonctions negatives1}) and (\ref{equation:nouvelles fonctions negatives2}), the map $F$ is holomorphic in $A$ and antiholomorphic in $B$ for the standard complex structure $J_0$ so we keep it.\\The diagonal $\Delta$ is a complex subspace of $\mathbb{C}^{4}$ which is generated over the complex numbers by the vectors 
\begin{equation}
\epsilon_1=(1,0,1,0)\ \ \ \ \ \epsilon_2=(0,1,0,1)
\end{equation}
If $i=0,...,n_1$ (resp. $j=0,...,n_3$), we write $a_i$ (resp. $b_j$) in real coordinates
\begin{equation}\label{parties reelles et imaginaires}
a_i=a_i^{(1)}+ia_i^{(2)}\ \ \ \ (\mbox{resp.}\ \ \ \ b_j=b_j^{(1)}+ib_j^{(2)})
\end{equation}
The map $F$ is now holomorphic in $A$ and antiholomorphic in $B$, hence  Lemma \ref{transversalite} will be proved once we prove
\begin{lem}\label{determinant}
$$det(\frac{\partial \Phi}{\partial a_0}, \frac{\partial \Phi}{\partial \overline{b_0}}, \epsilon_1, \epsilon_2)\neq 0$$
the determinant being computed over the complex numbers.
\end{lem}
\begin{proof}
We have
\begin{equation}\label{derivee partielle de Phi}
\frac{\partial \Phi}{\partial a_0}(A,B,z_1,z_2)=\left(\frac{\partial F}{\partial a_0}(z_1,A,B),\frac{\partial F}{\partial a_0}(z_2,A,B)\right)\in\mathbb{C}^2\times\mathbb{C}^2
\end{equation}
For $i=1,2$, we write in Euclidean complex coordinates in $\mathbb{C}^2$,
$$\frac{\partial F}{\partial a_0}(z_i,A,B)=\left(\frac{\partial f_1}{\partial a_0}(z_i,A,B),\frac{\partial f_4}{\partial a_0}(z_i,A,B)\right)$$
\begin{equation}\label{derivee partielle holomorphe de F}
=\left(\int_0^{z_i} \frac{\partial h_1}{\partial a_0}(\xi,A,B)d\xi, \int_0^{z_i} \frac{\partial h_4}{\partial a_0}(\xi,A,B)d\xi\right)
\in\mathbb{C}^2
\end{equation}
by differentiation under the integral sign, hence $$\frac{\partial \Phi}{\partial a_0}(A,B,z_1,z_2)=$$
\begin{equation}\label{derivee partielle holomorphe en 4 dimension}
\left
(\int_0^{z_1} \frac{\partial h_1}{\partial a_0}(\xi,A,B)d\xi, \int_0^{z_1} \frac{\partial h_4}{\partial a_0}(\xi,A,B)d\xi,\int_0^{z_2} \frac{\partial h_1}{\partial a_0}(\xi,A,B)d\xi, \int_0^{z_2} \frac{\partial h_4}{\partial a_0}(\xi,A,B)d\xi
\right)
\end{equation}
$$\mbox{Similarly}\ \ \ \ \frac{\partial \Phi}{\partial \overline{b_0}}(A,B,z_1,z_2)=$$
\begin{equation}\label{derivee partielle antiholomorphe en 4 dimension}
\left
(\int_0^{z_1} \frac{\partial \bar{h}_2}{\partial \overline{b_0}}(\xi,A,B)d\xi,
\int_0^{z_1} \frac{\partial \bar{h}_3}{\partial \overline{b_0}}(\xi,A,B)d\xi,
\int_0^{z_2} \frac{\partial \bar{h}_2}{\partial \overline{b_0}}(\xi,A,B)d\xi,
\int_0^{z_2} \frac{\partial \bar{h}_3}{\partial \overline{b_0}}(\xi,A,B)d\xi\right)
\end{equation} 
We can now compute $$det(\frac{\partial \Phi}{\partial a_0}, \frac{\partial \Phi}{\partial \overline{b_0}}, \epsilon_1, \epsilon_2)=$$
\begin{equation}\label{determinant avec 4 integrales}
\int_{z_2}^{z_1} \frac{\partial h_1}{\partial a_0}(\xi,A,B)d\xi
\int_{z_2}^{z_1} \frac{\partial \bar{h}_3}{\partial \overline{b_0}}(\xi,A,B)d\xi
-
\int_{z_2}^{z_1} \frac{\partial h_4}{\partial a_0}(\xi,A,B)d\xi
\int_{z_2}^{z_1} \frac{\partial \bar{h}_2}{\partial \overline{b_0}}(\xi,A,B)d\xi
\end{equation}
We now compute the derivatives involved:
\begin{equation}\label{derivatives of the hi1}
\frac{\partial h_1}{\partial a_0}(z,A,B)=\tilde{f}_1(z)\ \ \  
\frac{\partial h_2}{\partial b_0}(z,A,B)=z^{n_2-n_3}\tilde{f}_2(z)
\end{equation}
\begin{equation}\label{derivatives of the hi2}
\frac{\partial h_3}{\partial b_0}(z,A,B)=\tilde{f}_3(z)\ \ \
\frac{\partial h_4}{\partial b_0}(z,A,B)=z^{n_4-n_1}\tilde{f}_4(z)\ \ \
\end{equation}
We remind the reader that  $\tilde{f}_1(0)\neq 0$ and $\tilde{f}_3(0)\neq 0$; and on the other hand, $n_2-n_3>0$ and 
$n_4-n_1>0$. This enables us to derive the existence of a positive constant $C$ and of an $\eta>0$ such that,
if $|z_i|<\eta$, for $i=1,2$, then
$$|det(\frac{\partial \Phi}{\partial a_0}, \frac{\partial \Phi}{\partial \overline{b_0}}, \epsilon_1, \epsilon_2)|=|(\ref{determinant avec 4 integrales})|\geq C|z_1-z_2|^2$$
This concludes the proof of Lemmas  \ref{determinant} and \ref{transversalite}.
\end{proof}
We derive from Lemma \ref{transversalite} and from the Transversality Lemma ([G-P]) the existence of a dense subset $X_2$ of the product of the unit balls $\mathbb{B}^{n_1+1}\times \mathbb{B}^{n_3+1}$ such that, if $(A,B)\in X_2$, the map
$$\Phi(A,B,.,.):\{(z_1,z_2)\in\mathbb{D}\times\mathbb{D}\slash z_1\neq z_2\} \longrightarrow \mathbb{R}^4\times \mathbb{R}^4$$ is transversal to $\Delta$. It follows that, if $(A,B)\in X_1\cap X_2$, $F(.,A,B)$ has only transverse double points. To conclude the proof of Th. \ref{theorem:single disk}, we use the Curve Selection Lemma for subanalytic sets (see [B-M],[\L o]). In order to do this, we prove
\begin{lem}
$X_1\cap X_2$ is subanalytic.
\end{lem}
\begin{proof}
The complement of $X_1$ is algebraic so $X_1$ is semialgebraic, hence we just need to show that $X_2$ is subanalytic. We let
$${\mathcal Z}=\{(z_1,z_2,A,B)\in\mathbb{D}\times\mathbb{D}\times\mathbb{B}^{n_1+1}\times\mathbb{B}^{n_3+1}\slash
\|A\|\leq 1, \|B\|\leq 1, z_1\neq z_2\ \ \mbox{and}$$
$$det(\frac{\partial F}{\partial x_1}(z_1,A,B),\frac{\partial F}{\partial y_1}(z_1,A,B),
\frac{\partial F}{\partial x_2}(z_2,A,B),\frac{\partial F}{\partial y_2}(z_2,A,B),\epsilon_1, J_1\epsilon_1,
\epsilon_2, J_1\epsilon_2)^2$$
$$+\|F(z_1,A,B)-F(z_2,A,B)\|^2=0\}.$$
Note that here we are talking of the real determinant in $\mathbb{R}^8$; it follows from its definition that ${\mathcal Z}$ is semianalytic. \\
We let $$\Pi:\mathbb{D}\times\mathbb{D}\times\mathbb{C}^{n_1+1}\times\mathbb{C}^{n_3+1}\longrightarrow
\mathbb{C}^{n_1+1}\times\mathbb{C}^{n_3+1}$$ be the projection. Since ${\mathcal Z}$ is semianalytic, the set $\Pi({\mathcal Z})$ is subanalytic. It follows that $X_2=\mathbb{B}^{n_1+1}\times\mathbb{B}^{n_3+1}\backslash \Pi({\mathcal Z})$ is  subanalytic (cf. the theorem of the complement, [B-M]).
\end{proof}
Since $X_1\cap X_2$ is subanalytic and dense, the Curve Selection Lemma ensures the existence of an analytic path
$$\gamma:[0,\epsilon)\longrightarrow X_1\cap X_2$$
such that $\gamma(0)=0$ and for every $t>0$, $\gamma(t)\in X_1\cap X_2$. We let $F_t^{(+)}(z)=F(z,A(\gamma(t)),B(\gamma(t)))$. If $t>0$, $F_t^{(+)}$ is a minimal immersion with only transverse double points. This proves Th. \ref{theorem:single disk} 1), 2) and 3).
\begin{lem}
$\exists\eta_0\ \mbox{such that}\ \
\forall\eta<\eta_0, \exists t(\eta)\ \mbox{such that}\ \ \forall t, 0<t<t(\eta)$, the knots 
$K_t^{\eta}=F_t^{(+)}(\mathbb{D})\cap\mathbb{S}_\eta$ are transversally isotopic to $K^{\eta}=F(\mathbb{D})\cap\mathbb{S}_\eta$.
\end{lem}
\begin{proof} There is a constant $C$ such that, for $t$ small enough, $|A|\leq C|t|$ and $|B|\leq C|t|$.\\
Also, there exists an $\eta_1$ such that, if $\eta<\eta_1$ and $|F(z)|=\eta$, then 
$$|\rho^N-\eta|\leq\frac{\eta}{10}.$$
So for $\eta<\eta_1$, we let $t(\eta)$ such that, if $t<t(\eta)$ and $|F(z)|=\eta$, then 
\begin{equation}\label{estimee de rho ad hoc}
|\rho^N-\eta|\leq\frac{\eta}{5}
\end{equation}
We let $z=\rho e^{i\theta}\in\mathbb{D}$. We can derive from the construction of $F_t^{(+)}$, the following estimate
\begin{equation}\label{point}
F_t^{(+)}(z)=\rho^Ne^{Ni\theta}X+o(\rho^N)+{\mathcal O}(t)
\end{equation}
where $X=(1,0,0,0)\in\mathbb{R}^4$.\\ We need to say a word of what we mean by the ${\mathcal O}(t)$'s in this paragraph: these terms can contain terms in $\rho^k$, for $k>0$ (and/or later in the proof terms in $\rho^{-k}$). So once $\eta$ is fixed, we can derive $t$ in terms of $\eta$ (hence the notation $t(\eta)$ in the statement of the lemma) such that the ${\mathcal O}(t)$ is as small as we want. The term $o(\rho^N)$ on the other hand, is independent of $t$.\\
The vectors $\frac{1}{N}\rho\frac{\partial}{\partial\rho}$ and $\frac{1}{N}\frac{\partial}{\partial\theta}$ are orthogonal and of the same norm in $\mathbb{D}$; since the $F_t^{(+)}$'s are minimal, the vectors 
\begin{equation}\label{vecteurs tangents}
u_1=\frac{1}{N}\rho\frac{\partial F_t^{(+)}}{\partial\rho}\ \ \ \ u_2=\frac{1}{N}\frac{\partial F_t^{(+)}}{\partial\theta}
\end{equation}
are orthogonal and of the same norm and they generate the plane tangent to $F_t^{(+)}(\mathbb{D})$. We have
\begin{equation}\label{vecteurs tangents1}
u_1=\rho^Ne^{Ni\theta}X+o(\rho^N)+{\mathcal O}(t)\ \ \ u_2=\rho^Ne^{Ni\theta}iX+o(\rho^N)+{\mathcal O}(t)
\end{equation}
The vector $\gamma$ tangent to $K_t^{\eta}$ at $F_t^{(+)}(z)$ is of the form $\gamma=au_1+bu_2$ and verifies
\begin{equation}\label{orthogonality}
<F_t^{(+)}(z),\gamma>=0
\end{equation}
We derive from (\ref{orthogonality}), (\ref{point}) and (\ref{vecteurs tangents1}) that
\begin{equation}
<\gamma,u_1>=\|\gamma\|(o(\rho^{2N})+{\mathcal O}(t))
\end{equation}
Hence (remember that $\|u_1\|=\|u_2\|$)
$$<\gamma,u_2>^2=\|\gamma\|^2<u_1,u_1>^2-<\gamma,u_1>^2=\|\gamma\|^2(\rho^{2N})+o(\rho^{2N})+{\mathcal O}(t))$$
On the other hand, $iF_t^{(+)}(z)-u_2=o(\rho^{N})+{\mathcal O}(t)$ hence
$$<\gamma,iF_t^{(+)}(z)>^2=<\gamma,u_2>^2+\|\gamma\|^2(o(\rho^{2N})+{\mathcal O}(t))=
\|\gamma\|^2(\rho^{2N}+o(\rho^{2N})+{\mathcal O}(t))$$
$$\geq \|\gamma\|^2(\frac{\eta^2}{2}+o(\eta^2)+{\mathcal O}(t))$$
This last estimate is derived from (\ref{estimee de rho ad hoc}).
We derive the existence of a $\eta_0<\eta_1$ such that, $\forall \eta<\eta_0$, $\exists t(\eta)$ such that, if $t<t(\eta)$,
$<\gamma,iF_t^{(+)}(z)>\neq 0$. We conclude that all the $K^{\eta}_t$'s are all transverse and they are all transversally isotopic. 
\end{proof}
Note that the $K^{(\eta)}_t$'s are transverse w.r.t. the contact structures associated to {\it both} the symplectic structures. By contrast, the disks $F_t^{(+)}(\mathbb{D})$ and $F_t^{(-)}(\mathbb{D})$ are not symplectic for both structures.\\
The number $D_t$ of transverse double points $F_t^{(+)}$ is given by ([H-H])
\begin{equation}\label{number of double points}
2D_t=sl(K)+1=e(K)-(N-1)
\end{equation}
\end{proof}

\footnotesize{Marina.Ville@lmpt.univ-tours.fr\\LMPT,
 Universit\'e de Tours
 UFR Sciences et Techniques
 Parc de Grandmont
 37200 Tours, FRANCE}


\begin{thebibliography}{10}

\bibitem{bm}[B-M] E. Bierstone, P. D. Milman, {\it Semianalytic and subanalytic sets}, Publ. Math. de l'IHES, 67 (1988) 5-42.
\bibitem{be}[Be] A. Besse, L. B\'erard-Bergery, M. Berger, C. Houzel, {\it G\'eom\'etrie riemannienne en dimension $4$},
Cedic Fernand Nathan, 1981.
\bibitem{gp}[G-P]  V. Guillemin, A. Pollack, {\it Differential Topology}, Prentice Hall, Engle-
wood Cliffs, New Jersey, 1974.
\bibitem{hh}[H-H] S. Hainz, U. Hammenst\"adt, {\it Topological properties of Reeb orbits on boundaries of star-shaped domains in $\mathbb{R}^4$} in {\it Low-dimensional and Symplectic Topology}, Proc. Of Symposia in Pure Maths. 82 (2011),  AMS, Providence, Rhode Island 2011, 89-111. 
\bibitem{jo}[Jo] J. Jost, {\it Riemannian geometry and geometric analysis}, Springer, 2011.
\bibitem{lo}[\L o] S. \L ojasiewicz, {\it Sur la g\'eom\'etrie semi- et sous-analytique}, Ann. de l'Institut Fourier, 43(5) 1993, 1575-1595.
\bibitem{m-s}[M-S] X. Mo, R. Osserman, {\it On the Gauss map and total curvature of complete minimal surfaces and an extension of Fujimoto's theorem}, J. Diff. Geom. 31, (1990), 343-355.
\bibitem{m-w}[M-W] M.Micallef, B.White, {\it The structure of branch points in minimal surfaces and in pseudoholomorphic curves}  Annals of Mathematics, 139 (1994), 35-85.
\bibitem{mi}[Mi] J. Milnor {\it Singular points of Complex Hypersurfaces}, Annals of Mathematics Studies, 61 (1968), Princeton University Press.
\bibitem{sv1}[S-V1] M. Soret, M.Ville, {\it Singularity Knots of Minimal Surfaces in $R^4$}, Jour. of Knot theory and its ramifications, 20 (4), (2011), 513-546. 
\bibitem{sv2}[S-V2] M. Soret, M.Ville, {\it Some properties of simple minimal knots},  2012, arXiv:1212.2347.
\bibitem{vi}[Vi] M. Ville, {\it Branched immersions and braids}, Geom. Dedicata, 140(1), 2009, 145-162.

\end{thebibliography}
\end{document}